\newtheorem{theorem}{Theorem}[section]
\newtheorem{corollary}[theorem]{Corollary}
\newtheorem{lemma}[theorem]{Lemma}
\theoremstyle{definition}
\newtheorem{example}[theorem]{Example}
\numberwithin{equation}{section}
\def\R{{\mathbb R}}
\def\E{{\mathbb E}}
\begin{document}


\baselineskip=17pt


\title{Real Zeros of Random Sums with I.I.D. Coefficients}

\author{Aaron M. Yeager\\
Department of Mathematics\\
College of Coastal Georgia\\
One College Drive\\
Brunswick, GA 31520\\
E-mail: aaronyeager235@gmail.com}

\date{}

\maketitle


\renewcommand{\thefootnote}{}

\footnote{2010 \emph{Mathematics Subject Classification}: Primary 30C15, 30B20; Secondary 26C10, 60B99.}

\footnote{\emph{Key words and phrases}: Random Polynomials, Fourier Analysis, Orthogonal Polynomials, Bergman Polynomials.}

\renewcommand{\thefootnote}{\arabic{footnote}}
\setcounter{footnote}{0}


\begin{abstract}
Let $\{f_k\}$ be a sequence of entire functions that are real valued on the real-line.  We study the expected number of real zeros of random sums of the form
$P_n(z)=\sum_{k=0}^n\eta_k f_k(z)$, where $\{\eta_k\}$ are real valued i.i.d.~random variables.  We establish a formula for the density function $\rho_n$ for the expected number of real zeros of $P_n$.  As a corollary, taking the random variables $\{\eta_k\}$ to be i.i.d.~standard Gaussian, appealing to Fourier inversion we recover the representation for the density function previously given by Vanderbei through means of a different proof.  Placing the restrictions on the common characteristic function $\phi$ of $\{\eta_k\}$ that $|\phi(s)|\leq (1+as^2)^{-q}$, with $a>0$ and $q\geq 1$, as well as that $\phi$ is three times differentiable with each the second and third derivatives being uniformly bounded, we achieve an upper bound on the density function $\rho_n$ with explicit constants that depend only on the restrictions on $\phi$.  As an application we considered the limiting value of $\rho_n$ when the spanning functions $f_k(z)=p_k(z)$, $k=0,1,\dots, n$, where $\{p_k\}$ are Bergman polynomials on the unit disk.
\end{abstract}

\section{Introduction}

The systematic study of the expected number of real zeros of polynomials
\begin{equation*}
P_n(z)=\eta_nz^n+\eta_{n-1}z^{n-1}+\cdots +\eta_1 z + \eta_0
\end{equation*}
with random coefficients $\{\eta_j\}$, called \emph{random algebraic polynomials} (or \emph{Kac polynomials}), dates back to the early 1930's.  For early results in this area we refer the reader to the works \cite{BP}, \cite{LO1}, \cite{LO2}, \cite{LO3}, \cite{LO4}, \cite{LO5}, as well as the books by Bharucha-Reid and Sambandham \cite{BR} and Farahmand \cite{Fa}.

Let $\E$ denote the mathematical expectation and $N_n(S)$ denote the number of zeros of $P_n$ in a set $S$.  In 1943, for a measurable $\Omega \subset \R$, Kac \cite{K1} produced an integral equation for  $\E[N_n(\Omega)]$ for the random algebraic polynomial $ P_n $ when the random variables $\{\eta_j\}$ are i.i.d.~standard  Gaussian.
Independently while studying random noise in 1945, Rice \cite{R} derived a similar formula for $\E[N_n(\R)]$ in the Gaussian setting.
After Kac established the integral equation for $\E[N_n(\Omega)]$, he proved the asymptotic
\begin{equation}
\label{real-expected-Kac}
\E[N_n(\R)] = \frac{2+o(1)}\pi\log n \quad \text{as} \quad n\to\infty.
\end{equation}
The error term in the above asymptotic was further  sharpened by Hammersley \cite{HM}, Edelman and Kostlan \cite{EK}, and finally Wilkins \cite{WL}.

Kac conjectured that a similar asymptotic as \eqref{real-expected-Kac} should hold when the random variables are i.i.d.~ uniform on $[-1,1]$ following his same proof.  Realizing that the same proof would not go through, in \cite{K2} Kac was able to produce the asymptotic \eqref{real-expected-Kac} in the uniform random variable case. For other results concerning non-Gaussian random algebraic polynomials see \cite{EO}, \cite{IM1}, \cite{IM2}, \cite{IZ}, \cite{LS1}, \cite{LS2}, \cite{PritYgr15}, and \cite{NNV}.

Due to the work of Kac and Rice, formulas for the density function that give the expected number of real zeros of the random polynomial $P_n$ of the form
\begin{equation}\label{Kac-Rice}
\rho_n(x)=\int_{\R} |\eta| D_n(0,\eta;x)\ d\eta,
\end{equation}
where $D_n(\xi,\eta;x)$ is the joint density distribution of $P_n(x)$ and $P_n^{\prime}(x)$, with
\begin{equation*}
\E[N_n(\Omega)]=\int_{\Omega}\rho_n(x) \ dx
\end{equation*}
are called \emph{Kac-Rice formulas}.  We note that such formulas are also referred to as the \emph{intensity function} or the first \emph{correlation function}.

When dealing with the expected number of real zeros of 
$$P_n(z)=\sum_{j=0}^n\eta_jf_j(z),$$
where $\{f_j\}$ are any thing other than the monomials, and $\{\eta_j\}$ are non-Gaussian random variables, the Kac-Rice formula \eqref{Kac-Rice} still holds.  However the evaluation of this formula is very difficult.  In fact little is known about a workable shape of the intensity function in this non-Gaussian setting.

Instead of altering the spanning functions to not be the monomials, many authors have remained with the monomial basis and introduced weights that can help with asymptotics of the intensity function.  In this case the random sums take the shape
\begin{equation}\label{wrs}
G_n(z)=\sum_{j=0}^n\eta_j c_j z^j,
\end{equation}
where $\{\eta_j\}$ are i.i.d.~random variables, and $\{c_j\}$ are deterministic weights.  For results concerning the weighted random polynomial $G_n$ we direct the reader to the works  \cite{EK}, \cite{TV}, and \cite{DNV2}.

Appealing to Fourier transforms of distribution functions, Bleher and Di \cite{BD} gave a universality result for the expected number of real zeros of $G_n$ defined in \eqref{wrs} when coefficients $\{c_i\}$ are elliptical weights, that is weights of the form $\sqrt{n\choose i}$, and $\{\eta_i\}$ are i.i.d.~random variables with mean zero and variance one.  To achieve their result, Bleher and Di assume that the common characteristic function for the i.i.d.~random variables
\begin{equation*}
\phi(s)=\int_{\R}r(t)e^{its}\ dt
\end{equation*}
satisfies
$|\phi(s)|\leq (1+as^2)^{-q}$ with $a,q>0$,
and
$\sup_{s\in \R}|\frac{d^j}{ds^j}\phi(s)|\leq c_j$, for $j=2,3$,
where $c_2,c_3>0$ are constants. Under these assumptions, for $x\neq 0$ they show that
\begin{equation}\label{BLD}
\lim_{n\rightarrow \infty}\frac{\rho_n(x)}{\sqrt{n}}=\frac{1}{\pi(1+x^2)},
\end{equation}
where $\rho_n$ is the intensity function for the random sum $G_n$.  Under further assumptions on the shape of characteristic function $\phi(s)$ and that the derivatives up to the the sixth order are bounded, they show that \eqref{BLD} also holds for $x=0$.  In light of the work by Edelmon and Kostlan \cite{EK}, the result \eqref{BLD} matches up with the case when the random variables of the weighted random sum are i.i.d.~ standard  Gaussian.  The technique that Bleher and Di use also allows them to extend their result to higher order correlation functions and to non-Gaussian multivariate weighed random polynomials.

Applying techniques given by Bleher and Di in \cite{BD}, we achieve a workable representation for the density function of the expected number of zeros of a random sum spanned by entire functions that are real valued on the real-line.  To specify these results,  let $\{\eta_k\}$ be real valued i.i.d.~ random variables such that
\begin{equation}\label{cs}
\E[\eta_k]=0,\ \ \ \E[\eta^2_k]=1,\ \ \ k=0,1,\dots,n.
\end{equation}
Consider the random linear combination
\begin{equation}\label{Pn}
P_n(z)=\eta_nf_n(z)+\eta_{n-1}f_{n-1}(z)+\cdots+\eta_1f_1(z)+\eta_0f_0(z),
\end{equation}
with $f_n,f_{n-1},\dots, f_0$ being entire functions that are real valued on the real-line.  Let $\rho_n(x)$ be density function for the expected
number of real zeros of $P_n(x)$. Define
\begin{equation}\label{Kn}
\mathcal K_n(x):=\frac{\sqrt{K_n^{(1,1)}(x,x)K_n(x,x)-\left(K_n^{(0,1)}(x,x)\right)^2  }}{ K_n(x,x)} ,
\end{equation}
where
\begin{align}\nonumber
K_n(x,x)=\sum_{j=0}^n&f_j(x)^2,\  \ \ K_n^{(0,1)}(x,x)=\sum_{j=0}^nf_j(x)f_j^{\ \prime}(x), \\
\label{kernels}
& K_n^{(1,1)}(x,x)=\sum_{j=0}^nf_j^{\ \prime}(x)^2.
\end{align}

\begin{theorem}\label{denlammu}
The density function $\rho_n(x)$ of the real zero distribution of the random sum $P_n(x)$ given by \eqref{Pn} can be written as
\begin{equation}\label{rhon}
\rho_n(x)=\mathcal K_n(x)\int_{\R}|\eta|\ \widehat{D}_n(0,\eta;x)\ d\eta,
\end{equation}
where $\widehat{D}_n(\xi,\eta;x)$ is the joint distribution density of the random variables
$$g_n(x)=\sum_{k=0}^n\mu_k(x)\eta_k,\ \ \ h_n(x)=\sum_{k=0}^n\lambda_k(x)\eta_k,$$
with
$$\mu_k(x)=\frac{f_k(x)}{\left( K_n(x,x) \right)^{1/2}},$$
and
$$ \lambda_k(x)=\frac{K_n(x,x)f_k^{\
\prime}(x)-K_n^{(0,1)}(x,x)f_k(x)}{\left[K_n(x,x)\left(K_n^{(1,1)}(x,x)K_n(x,x)-K_n^{(0,1)}(x,x)^2  \right)  \right]^{1/2}}.$$
Furthermore, $\{\mu_k\}$ and $\{\lambda_k\}$ satisfy
\begin{equation}\label{proplammu}
\sum_{k=0}^n\mu_k(x)^2=\sum_{k=0}^n\lambda_k(x)^2=1, \ \ \ \text{and} \ \ \ \sum_{k=0}^n\lambda_k(x)\mu_k(x)=0.
\end{equation}
\end{theorem}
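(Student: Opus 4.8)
The plan is to begin from the Kac--Rice formula \eqref{Kac-Rice}, which in the present setting reads $\rho_n(x)=\int_{\R}|\eta|\,D_n(0,\eta;x)\,d\eta$, where $D_n$ is the joint density of the pair $(P_n(x),P_n'(x))$, and to reduce it to the stated form by a linear change of variables. The key observation is that $g_n(x)$ and $h_n(x)$ are merely a linear reparametrization of $P_n(x)$ and $P_n'(x)$: dividing the defining sums through by the common deterministic factors, one reads off $g_n(x)=P_n(x)/K_n(x,x)^{1/2}$ and $h_n(x)=\big(K_n(x,x)P_n'(x)-K_n^{(0,1)}(x,x)P_n(x)\big)\big/[K_n(x,x)(K_n^{(1,1)}(x,x)K_n(x,x)-K_n^{(0,1)}(x,x)^2)]^{1/2}$. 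Thus $(g_n,h_n)=L(P_n,P_n')$ for the lower-triangular matrix $L$ whose entries are these coefficients, and a short computation gives $\det L=(K_n^{(1,1)}K_n-(K_n^{(0,1)})^2)^{-1/2}$. Throughout one uses that $K_n^{(1,1)}K_n-(K_n^{(0,1)})^2>0$, which holds by the Cauchy--Schwarz inequality applied to the vectors $(f_k(x))_k$ and $(f_k'(x))_k$ away from the degenerate points where these are proportional; this is what makes $L$ invertible and $\mathcal K_n(x)$ real.

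The next step is to push the Kac--Rice integral through this change of variables. Since $\widehat D_n$ is by definition the density of $(g_n,h_n)$, the change-of-variables rule for densities gives $D_n(u,v;x)=\widehat D_n(g,h;x)\,|\det L|$. Imposing the condition $u=P_n(x)=0$ forces $g=0$, and the second coordinate collapses to the one-dimensional relation $v=\big((K_n^{(1,1)}K_n-(K_n^{(0,1)})^2)/K_n\big)^{1/2}\,h$, so that both $|v|$ and $dv$ pick up the factor $\big((K_n^{(1,1)}K_n-(K_n^{(0,1)})^2)/K_n\big)^{1/2}$. Substituting these three contributions into $\int_{\R}|v|\,D_n(0,v;x)\,dv$ and collecting the powers of $K_n$ and of $K_n^{(1,1)}K_n-(K_n^{(0,1)})^2$, the accumulated prefactor simplifies to $(K_n^{(1,1)}K_n-(K_n^{(0,1)})^2)^{1/2}/K_n=\mathcal K_n(x)$, which is exactly \eqref{rhon}.

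Finally, the identities \eqref{proplammu} are verified by direct substitution of the kernel definitions \eqref{kernels}. For $\sum_k\mu_k^2$ one gets $K_n/K_n=1$; for $\sum_k\lambda_k\mu_k$ the numerator expands to $K_n K_n^{(0,1)}-K_n^{(0,1)}K_n=0$; and for $\sum_k\lambda_k^2$ the numerator expands to $K_n^2K_n^{(1,1)}-2K_n(K_n^{(0,1)})^2+K_n(K_n^{(0,1)})^2=K_n(K_nK_n^{(1,1)}-(K_n^{(0,1)})^2)$, which cancels against the denominator to give $1$.

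I expect the only genuine subtlety to be bookkeeping in the change of variables: one must carefully distinguish the Jacobian $|\det L|$ coming from the density transformation from the one-dimensional factor $dv/dh$ arising after setting $u=0$, since both involve the same expression $K_n^{(1,1)}K_n-(K_n^{(0,1)})^2$ and it is tempting either to double count or to drop one. Keeping these two factors separate is precisely what produces the correct single power of $\mathcal K_n(x)$ rather than a spurious extra factor.
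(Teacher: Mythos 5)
Your proposal is correct and follows essentially the same route as the paper: both start from the Kac--Rice formula and perform a linear change of variables from $(P_n(x),P_n'(x))$ to $(g_n(x),h_n(x))$, with the prefactor collapsing to $\mathcal K_n(x)$; the paper merely splits your single triangular map $L$ into two stages (a diagonal rescaling by $K_n^{1/2}$ and $(K_n^{(1,1)})^{1/2}$, followed by a Gram--Schmidt orthogonalization), whereas you compose them and verify \eqref{proplammu} by direct expansion. Your Jacobian bookkeeping ($|\det L|=\Delta^{-1/2}$ together with the factor $(\Delta/K_n)^{1/2}$ from each of $|v|$ and $dv$, where $\Delta=K_n^{(1,1)}K_n-(K_n^{(0,1)})^2$) reproduces exactly the paper's $\sqrt{K_n^{(1,1)}/K_n}\,\tau_n(x)=\mathcal K_n(x)$.
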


\begin{corollary}\label{intGauss}
Let the common characteristic function of the i.i.d.~random variables $\{\eta_k\}$ be $\phi(s)=\exp(-as^2)$, where $a\in (0,\infty)$ is any fixed number.  Then \eqref{rhon} satisfies
\begin{equation}\label{Gaussian}
\rho_n(x)=\frac{1}{\pi}\mathcal K_n(x).
\end{equation}
\end{corollary}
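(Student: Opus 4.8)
The plan is to exploit the explicit representation \eqref{rhon} furnished by Theorem \ref{denlammu}, observing that the hypothesis $\phi(s)=\exp(-as^2)$ forces the auxiliary variables $g_n(x)$ and $h_n(x)$ to be jointly Gaussian, so that the joint density $\widehat{D}_n(0,\eta;x)$ can be written in closed form and the remaining one-dimensional integral evaluated by hand. First I would note that $\phi(s)=\exp(-as^2)$ is precisely the characteristic function of a centered normal random variable with variance $2a$, so that each $\eta_k\sim N(0,2a)$. Since, for each fixed $x$, the quantities $g_n(x)=\sum_{k=0}^n\mu_k(x)\eta_k$ and $h_n(x)=\sum_{k=0}^n\lambda_k(x)\eta_k$ are fixed linear combinations of independent centered Gaussians, the pair $(g_n(x),h_n(x))$ is jointly Gaussian with mean zero.

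Next I would compute the $2\times 2$ covariance matrix of $(g_n(x),h_n(x))$ using the orthonormality relations \eqref{proplammu}. Because $\mathrm{Var}(\eta_k)=2a$ and the $\eta_k$ are independent,
\begin{equation*}
\mathrm{Var}(g_n(x))=2a\sum_{k=0}^n\mu_k(x)^2=2a,\qquad \mathrm{Var}(h_n(x))=2a\sum_{k=0}^n\lambda_k(x)^2=2a,
\end{equation*}
while the cross term vanishes,
\begin{equation*}
\mathrm{Cov}(g_n(x),h_n(x))=2a\sum_{k=0}^n\mu_k(x)\lambda_k(x)=0.
\end{equation*}
Thus the covariance matrix is $2a\,I$; as uncorrelated jointly Gaussian variables are independent, $g_n(x)$ and $h_n(x)$ are independent $N(0,2a)$ variables, and their joint density is
\begin{equation*}
\widehat{D}_n(\xi,\eta;x)=\frac{1}{4\pi a}\exp\!\left(-\frac{\xi^2+\eta^2}{4a}\right).
\end{equation*}

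Finally I would substitute this into \eqref{rhon} and evaluate the resulting integral. Setting $\xi=0$ and computing $\int_{\R}|\eta|\exp(-\eta^2/(4a))\,d\eta=4a$ via the substitution $u=\eta^2/(4a)$ gives
\begin{equation*}
\int_{\R}|\eta|\,\widehat{D}_n(0,\eta;x)\,d\eta=\frac{1}{4\pi a}\cdot 4a=\frac1\pi,
\end{equation*}
so that $\rho_n(x)=\mathcal K_n(x)/\pi$, as claimed. There is no genuine obstacle here: the computation is routine once independence is in hand, and the only point meriting care is the passage from \emph{uncorrelated} to \emph{independent}, which is legitimate precisely because $(g_n(x),h_n(x))$ is jointly Gaussian. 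It is worth remarking that the parameter $a$ cancels, consistent with the fact that the real zeros of $P_n$, and hence $\rho_n$, are invariant under simultaneously rescaling all coefficients $\eta_k$; thus the answer cannot depend on the scale parameter $a$.
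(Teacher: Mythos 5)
Your proof is correct, and it reaches the paper's formula by the same overall strategy: invoke the representation \eqref{rhon}, identify the explicit joint density $\widehat{D}_n(\xi,\eta;x)=\frac{1}{4\pi a}\exp\left(-\frac{\xi^2+\eta^2}{4a}\right)$, and evaluate the one-dimensional integral to get $1/\pi$. The only real difference is how the joint density is obtained. The paper computes the characteristic function $\Phi_n(\gamma)=\prod_{k}\phi(\omega_k)=\exp(-a|\gamma|^2)$ with $\omega_k=\alpha\mu_k(x)+\beta\lambda_k(x)$, using \eqref{proplammu} to get $\sum_k\omega_k^2=|\gamma|^2$, and then recovers $\widehat{D}_n(0,\eta;x)$ by Fourier inversion; you instead observe directly that $(g_n(x),h_n(x))$ is a centered Gaussian vector and compute its covariance matrix $2aI$ from the same relations \eqref{proplammu}, then use that uncorrelated jointly Gaussian variables are independent. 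Both hinge on exactly the same orthonormality identities, so the content is identical; the paper's detour through characteristic functions is deliberate, since it is the template for the non-Gaussian estimates of Theorem \ref{mainthm} (Lemmas \ref{lem4.1}--\ref{lem4.3}), whereas your version is slightly more economical for this corollary in isolation. Your closing remark on the cancellation of $a$ matches the paper's observation that the variance factor $2a$ cancels algebraically in $\mathcal K_n(x)$, and all of your computations (variances, the value $\int_{\R}|\eta|e^{-\eta^2/(4a)}\,d\eta=4a$, and the final quotient $1/\pi$) check out.
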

In particular, when $a=1/2$, that is when the random variables $\{\eta_k\}$ are i.i.d.~ standard Gaussian, the above theorem recovers the result proven by Vanderbei (Theorem 1.2 \cite{CZRS}).  The proof of Corollary \ref{intGauss} differs from Vanderbei's proof in that it uses the representation \eqref{rhon} and Fourier Transforms instead of relying on the argument principle.  Furthermore, following our approach allows \eqref{Gaussian} to hold for i.i.d.~ scaled mean zero Gaussian random variables.

For the next result we will need some assumptions on the common characteristic function $\phi(s)$ of the i.i.d.~ random variables $\{\eta_k\}$.
Assume that $\phi(s)$ satisfies the following: for fixed $a>0$ and $q\geq 1$,
\begin{equation}\label{hyphi1}
|\phi(s)|\leq \frac{1}{(1+as^2)^q}, \ \ s\in \R,
\end{equation}
and that $\phi(s)$ is a three times differentiable function with there existing constants $C_2, C_3>0$ such that
\begin{equation}\label{hyphi2}
\sup_{-\infty<s<\infty}\left| \frac{d^j\phi(s)}{ds^j} \right|\leq C_j, \ \ \ j=2,3.
\end{equation}

\begin{theorem}\label{mainthm}
Suppose that the characteristic function for the collection of i.i.d.~random variables $\{\eta_i\}$ possess conditions \eqref{hyphi1} and \eqref{hyphi2}.  Then the density function
$\rho_n(x)$ of the real zero distribution of the random sum $P_n(x)$ to satisfies
\begin{equation}
\rho_n(x)\leq \mathcal K_n(x)\frac{1}{aq}\left[k_1+C_3k_2+\frac{1}{\sqrt{aq}} C_2(k_3+C_2k_4) \right].
\end{equation}
Here $k_1,k_2,k_3,k_4$ are constants that depends only on the conditions \eqref{hyphi1} and \eqref{hyphi2},
with $k_1\approx 0.36$, $k_2\approx 0.27$, $k_3\approx 0.21$, and $k_4\approx 1.18$.
\end{theorem}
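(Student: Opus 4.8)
The plan is to start from the representation \eqref{rhon} of Theorem \ref{denlammu}, which reduces the problem to bounding the factor
$$I:=\int_{\R}|\eta|\,\widehat{D}_n(0,\eta;x)\,d\eta,$$
since $\rho_n(x)=\mathcal K_n(x)\,I$. First I would write $\widehat{D}_n$ by Fourier inversion: because $\{\eta_k\}$ are i.i.d.\ with common characteristic function $\phi$, the joint characteristic function of $(g_n,h_n)$ is $\prod_{k=0}^n\phi\big(s\mu_k+t\lambda_k\big)$ (suppressing the dependence on $x$), so that
$$\widehat{D}_n(0,\eta;x)=\frac{1}{4\pi^2}\int_{\R^2}e^{-it\eta}\prod_{k=0}^n\phi\big(s\mu_k+t\lambda_k\big)\,ds\,dt.$$
Then I would insert the classical identity $|\eta|=\tfrac1\pi\int_{\R}\tfrac{1-\cos(u\eta)}{u^2}\,du$ and apply Fubini (justified by \eqref{hyphi1}) to obtain
$$I=\frac{1}{\pi}\int_{\R}\frac{R(0)-\mathrm{Re}\,R(u)}{u^2}\,du,\qquad R(u):=\frac{1}{2\pi}\int_{\R}\prod_{k=0}^n\phi\big(s\mu_k+u\lambda_k\big)\,ds.$$

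The engine of every estimate is a radial decay bound. Using the orthonormality relations \eqref{proplammu}, namely $\sum_k\mu_k^2=\sum_k\lambda_k^2=1$ and $\sum_k\mu_k\lambda_k=0$, together with the elementary inequality $\prod_k(1+x_k)\ge1+\sum_k x_k$ for $x_k\ge0$ and hypothesis \eqref{hyphi1}, I would show
$$\Big|\prod_{k=0}^n\phi\big(s\mu_k+u\lambda_k\big)\Big|\le\Big(1+a\sum_k(s\mu_k+u\lambda_k)^2\Big)^{-q}=\big(1+a(s^2+u^2)\big)^{-q}.$$
This single inequality is what converts the hypotheses on $\phi$ into the $a,q$-dependence of the final bound; integrating it in one variable produces factors of order $(aq)^{-1/2}$ and in two variables factors of order $(aq)^{-1}$, which is the source of the overall prefactor $1/(aq)$.

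Next I would split $I=\tfrac1\pi\big(\int_{|u|>\delta}+\int_{|u|\le\delta}\big)$. On the tail $|u|>\delta$ the subtracted numerator is controlled by $R(0)+|R(u)|$, and the radial bound gives both $R(0)$ and $\int_{|u|>\delta}|R(u)|u^{-2}\,du$ explicitly in terms of $a,q$; this produces the leading constant $k_1/(aq)$. On the central part I would exploit that $\mathrm{Re}\,R(u)$ is even in $u$ (a consequence of $\overline{R(u)}=R(-u)$, which forces $\mathrm{Re}\,R'(0)=0$), so the first-order term drops and Taylor's theorem yields
$$\frac{R(0)-\mathrm{Re}\,R(u)}{u^2}=-\tfrac12\,\mathrm{Re}\,R''(0)-\tfrac16\,\mathrm{Re}\,R'''(\xi)\,u$$
for an intermediate $\xi$. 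Differentiating the product and using $|\phi|\le1$, $|\phi'(s)|\le\min(1,C_2|s|)$ (valid since $\phi'(0)=0$ and $|\phi''|\le C_2$), $|\phi''|\le C_2$, and $|\phi'''|\le C_3$, the $\phi''$-contribution to $R''(0)$ produces the term with $C_2$, the $(\phi')^2$ cross-contribution (bounded through $|\phi'(s)|\le C_2|s|$) produces the term with $C_2^2$, and the $\phi'''$-contribution to $R'''$ produces the term with $C_3$; keeping track of the extra $s$-integration in the $C_2$-terms is what lowers their $aq$-power to $(aq)^{-3/2}$.

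The main obstacle I anticipate is precisely this last step: when the product $\prod_k\phi(w_k)$ is differentiated in $u$, one or two factors are replaced by $\phi',\phi''$ or $\phi'''$, and those factors no longer contribute decay, so the clean radial bound is unavailable and the surviving product $\prod_{k\ne j}\phi$ (or $\prod_{k\ne j,\ell}\phi$) must be integrated in $s$ on its own. After applying \eqref{hyphi1} to the survivors one is left with weighted sums such as $\sum_j\lambda_j^2\big(1-\mu_j^2\big)^{-1/2}$ and its two-index analogue, which threaten to blow up when some $\mu_j$ is close to $1$. The crux is to show these sums are bounded by absolute constants; this is where all three orthonormality relations in \eqref{proplammu} are essential, since $\sum_k\mu_k\lambda_k=0$ forces $\lambda_j$ to be small exactly when $\mu_j$ is close to $1$, preventing the blow-up and yielding the numerical constants $k_2,k_3,k_4$. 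Once these sums are controlled, collecting the tail and central estimates and optimizing the split point $\delta$ gives the stated inequality.
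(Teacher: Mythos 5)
Your starting point and raw ingredients coincide with the paper's: both arguments begin from the representation $\rho_n(x)=\mathcal K_n(x)\int_{\R}|\eta|\,\widehat{D}_n(0,\eta;x)\,d\eta$ of Theorem \ref{denlammu}, and both exploit the product form $\Phi_n(\alpha,\beta)=\prod_k\phi(\alpha\mu_k+\beta\lambda_k)$, the inequality $\prod_k(1+x_k)\geq 1+\sum_k x_k$ combined with \eqref{hyphi1}, the relations \eqref{proplammu}, and the consequence $|\phi'(s)|\leq C_2|s|$ of $\phi'(0)=0$. The organization, however, is genuinely different. The paper (Lemmas \ref{lem4.1}--\ref{lem4.3}) bounds $\Phi_n$, $\partial_\beta\Phi_n$ and $\partial_\beta^3\Phi_n$ pointwise on $\R^2$, converts these by two-dimensional Fourier inversion into $|\widehat{D}_n(0,\eta;x)|\leq K_1/(1+|\eta|)$ and $\leq K_2/(1+|\eta|^3)$, and splits the $\eta$-integral at $|\eta|=1$; the constants $k_1,\dots,k_4$ are precisely the numbers $2(1-\log 2)$ and $\tfrac29(\sqrt3\pi+\log 8)$ produced by that split, combined with $K_1,K_2$. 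Since $\int_{\R^2}(1+a|\gamma|^2)^{-q}d\gamma$ diverges at $q=1$, the paper needs the partition trick of Lemma \ref{lem4.1} to boost the exponent to $L=Tq\geq 2$; your route --- the identity $|\eta|=\pi^{-1}\int_{\R}(1-\cos u\eta)u^{-2}du$, the reduction to $R(u)=\tfrac1{2\pi}\int_{\R}\Phi_n(s,u)\,ds$, and a Taylor expansion of $\operatorname{Re}R$ at $0$ --- only ever integrates in one variable and so sidesteps that device entirely. The identity for $I$ and the evenness of $\operatorname{Re}R$ are correct.

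The gap is in the step you yourself flag as the crux, and it is more serious than your outline suggests. For the single-index terms (one factor replaced by $\phi''$ or $\phi'''$), the sum $\sum_j\lambda_j^2(1-\mu_j^2)^{-1/2}$ is indeed bounded by an absolute constant, because $\sum_k\mu_k\lambda_k=0$ forces $|\lambda_j|\lesssim(1-\mu_j^2)^{1/2}$ when $\mu_j^2$ is near $1$. But in the cross terms of $R''(0)$ and $R'''$ two factors are replaced by $\phi'$, and the surviving product decays only like $(1+as^2(1-\mu_j^2-\mu_l^2))^{-q}$. When $\mu_j^2+\mu_l^2=1$ --- which holds identically for $n=1$ and can be approached for any $n$ --- the surviving product is $\equiv 1$, and the hypotheses \eqref{hyphi1}--\eqref{hyphi2} supply no integrable bound on $\phi'$ itself, only $|\phi'(s)|\leq\min(1,C_2|s|)$ (the bound by $1$ coming from $\E[\eta_k^2]=1$, not from \eqref{hyphi2}); hence $\int_{\R}|\phi'(s\mu_j)\phi'(s\mu_l)|\,ds$ may be infinite and the term-by-term absolute estimate fails. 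Orthogonality does not rescue this case: $\sum_k\mu_k\lambda_k=0$ controls $\mu_j\lambda_j+\mu_l\lambda_l$ but not $\lambda_j$ and $\lambda_l$ separately (take $\mu_j=\mu_l=1/\sqrt2$, $\lambda_j=-\lambda_l=1/\sqrt2$, all other coefficients zero). Closing the central estimate therefore requires an additional idea --- retaining decay from the differentiated factors, an integration by parts, or a nondegeneracy hypothesis --- none of which appears in the outline. Finally, even granting all of this, optimizing your split point $\delta$ would yield a bound of the stated form but with constants different from the specific values $k_1\approx 0.36,\dots,k_4\approx 1.18$ asserted in the theorem, which are tied to the paper's split at $|\eta|=1$.
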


We note that the constants $k_1$, $k_2$, $k_3$, and $k_4$ in Theorem \ref{mainthm} are given explicitly in the proof.

\begin{example}
Consider a Laplace distribution with characteristic function of the form
\begin{equation*}
\phi(s)=\frac{1}{1+s^2/2}.
\end{equation*}
Note that such a function is always greater than or equal to the Gaussian characteristic function $\exp(-s^2/2)$.  In this example we have $C_2=1$ and $C_3\approx 1.65$, so that along with Theorem \ref{mainthm} we see that
\begin{equation*}
\rho_n(x)\leq \mathcal K_n(x)\times 5.56174\dots =\frac{1}{\pi}\mathcal K_n(x)\times 17.4727\dots.
\end{equation*}
Thus in light of Corollary \ref{Gaussian}, it follows that $\rho_n(x)$ is at most $17.4727\dots$ larger than the case when the random variables are from the Gaussian distribution with characteristic function $\exp(-as^2)$, with $a>0$.
\end{example}

Asymptotics for the density function $\rho_n(x)$ in the case when the random variables $\{\eta_k\}$ are i.i.d.~standard Gaussian has been well studied when the spanning functions are trigonometric functions \cite{CZRS}, polynomials orthogonal on the real line (\cite{D},\cite{DB}, \cite{BY}, \cite{LPX}, \cite{LPX2}, \cite{PRIT}, \cite{AY2}), and polynomials orthogonal on the unit circle (\cite{AY2}, \cite{HY}, \cite{YY}).  As an application we consider the case $P_n(x)=\sum_{k=0}^n\eta_k f_k(x)$ with $f_k(x)=p_k(x)$, where $p_k(z)=\sqrt{(k+1)\pi}z^k$ are Bergman polynomials on the unit disk, i.e.~polynomials orthogonal with respect to area measure over the unit disk.
\begin{theorem}\label{Berg1}
Let $f_k(x)=p_k(x)=\sqrt{(k+1)/\pi}z^k$, $k=0,\dots, n$, where $\{p_k\}$ are Bergman polynomials on the unit disk.  Then the function $\mathcal K_n(x)$ defined at \eqref{Kn} possess
\begin{equation*}
\lim_{n\rightarrow \infty}\mathcal K_n(x)=\begin{cases}
      \displaystyle\frac{\sqrt{2}}{1-x^2} & |x|<1, \\[1.5ex]
      \displaystyle\frac{1}{x^2-1} & |x|>1.
   \end{cases}
\end{equation*}
Furthermore, the above convergence holds locally uniformly on the respective domains and on the boundary we have
$$\mathcal K_n(\pm 1)=\frac{1}{3}\sqrt{\frac{n(n+3)}{2} }.$$
\end{theorem}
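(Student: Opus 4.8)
The plan is to reduce $\mathcal K_n(x)$ to an explicit expression and then analyze the three regimes $|x|<1$, $|x|>1$, and $|x|=1$ separately. First I would substitute $f_k(x)=\sqrt{(k+1)/\pi}\,x^k$ into \eqref{kernels} and write $y=x^2$; the factor $1/\pi$ and the surplus powers of $x$ pull out, giving $K_n(x,x)=T_0/\pi$, $K_n^{(0,1)}(x,x)=T_1/(\pi x)$, and $K_n^{(1,1)}(x,x)=T_2/(\pi x^2)$, where
$$T_0=\sum_{j=0}^n(j+1)y^j,\quad T_1=\sum_{j=0}^n(j+1)j\,y^j,\quad T_2=\sum_{j=0}^n(j+1)j^2y^j.$$
Feeding these into \eqref{Kn} collapses the $1/\pi$ factors and one power of $y$, leaving $\mathcal K_n(x)=\sqrt{T_0T_2-T_1^2}/(|x|\,T_0)$. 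The decisive step is the Lagrange (Gram-determinant) identity
$$T_0T_2-T_1^2=\sum_{0\le i<j\le n}(i+1)(j+1)(j-i)^2\,y^{i+j},$$
which one sees by reading $T_0,T_1,T_2$ as the three pairwise inner products of the vectors $(\sqrt{j+1}\,y^{j/2})_j$ and $(\sqrt{j+1}\,j\,y^{j/2})_j$. This rewrites the radicand as a manifestly positive sum and is what lets us sidestep the leading-order cancellation noted below.

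For $|x|<1$, i.e.\ $y<1$, every series converges as $n\to\infty$. From $\sum_{j\ge0}(j+1)y^j=(1-y)^{-2}$ one gets $T_0\to(1-x^2)^{-2}$, and summing the double series in the Gram identity (set $d=j-i$ and sum the geometric series and its derivatives in $i$) would give $T_0T_2-T_1^2\to 2y(1-y)^{-6}=2x^2(1-x^2)^{-6}$. Taking the ratio then yields $\lim\mathcal K_n(x)=\sqrt2/(1-x^2)$. Local uniform convergence on compact subsets of the disk follows from the Weierstrass $M$-test, since on $|x|\le r<1$ the tails of all three series are dominated by a single convergent series.

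For $|x|>1$, i.e.\ $y>1$, I would extract the dominant scale by factoring $y^n$ out of each sum. A short computation gives $T_0\sim n\,y^{n+1}/(y-1)$, and the same rescaling applied to the Gram identity---whose dominant contributions come from $i,j$ within $O(1)$ of $n$---gives $T_0T_2-T_1^2\sim n^2\,y^{2n+3}/(y-1)^4$. Dividing, the factors $n$ and $y^{n+1}$ cancel and $y^{1/2}/|x|=1$, so $\mathcal K_n(x)\to 1/(y-1)=1/(x^2-1)$. The error terms are geometric in $y^{-1}$ and hence decay uniformly once $|x|$ is bounded away from $1$, which delivers the stated local uniform convergence.

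At $x=\pm1$ we have $y=1$ and $\mathcal K_n(\pm1)=\sqrt{T_0T_2-T_1^2}/T_0$. Here Faulhaber's formulas give the closed forms $T_0=(n+1)(n+2)/2$, $T_1=n(n+1)(n+2)/3$, and $T_2=n(n+1)(n+2)(3n+1)/12$; substituting and reducing the bracket $(3n+1)/24-n/9=(n+3)/72$ yields $T_0T_2-T_1^2=n(n+3)(n+1)^2(n+2)^2/72$, whence $\mathcal K_n(\pm1)=\tfrac13\sqrt{n(n+3)/2}$. The main obstacle throughout is precisely the cancellation of leading asymptotics in $T_0T_2-T_1^2$: in both the $|x|<1$ and $|x|>1$ regimes the products $T_0T_2$ and $T_1^2$ agree to leading order, so asymptotics of the individual factors are useless. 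The Gram-determinant identity resolves this by exhibiting the difference as a positive sum whose own leading term can be read off directly, and the secondary task of upgrading pointwise limits to locally uniform ones is handled by the uniform geometric tail estimates indicated above.
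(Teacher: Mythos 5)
Your proposal is correct, and while its overall skeleton (compute the three kernels, find asymptotics of the determinant $K_n^{(1,1)}K_n-(K_n^{(0,1)})^2$ in the two regimes, then do the endpoint by Faulhaber sums) matches the paper, the decisive step is handled by a genuinely different device. The paper first sums $K_n(z,w)=\frac1\pi\sum_{k\le n}(k+1)(z\bar w)^k$ in closed form, differentiates that rational expression, and extracts the asymptotics of the determinant ``after algebraic simplification''---a computation in which the leading terms of $K_nK_n^{(1,1)}$ and $(K_n^{(0,1)})^2$ cancel and one must track the subleading terms of explicit rational functions. You instead reduce everything to the power sums $T_0,T_1,T_2$ in $y=x^2$ and invoke the Lagrange/Gram identity
\begin{equation*}
T_0T_2-T_1^2=\sum_{0\le i<j\le n}(i+1)(j+1)(j-i)^2\,y^{i+j},
\end{equation*}
which exhibits the determinant as a manifestly positive sum whose leading behavior can be read off directly (summing the $d=j-i$ series gives $2y(1-y)^{-6}$ for $y<1$ and $n^2y^{2n+3}(y-1)^{-4}$ for $y>1$, both of which I have checked against the paper's \eqref{nbx}); this buys you a cancellation-free argument, cleaner uniformity estimates, and positivity of the radicand for free, at the cost of a slightly less elementary identity. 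Your endpoint computation agrees exactly with the paper's. Two small remarks: your intermediate formula $\mathcal K_n(x)=\sqrt{T_0T_2-T_1^2}/(|x|\,T_0)$ has an apparent singularity at $x=0$, which is removable because the Gram sum carries an overall factor of $y$ (as your limit $2y(1-y)^{-6}$ makes plain), and it would be worth one sentence saying so; and for $|x|>1$ the phrase ``dominant contributions from $i,j$ within $O(1)$ of $n$'' should be backed by the uniform geometric tail bound you allude to, but that is routine.
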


The above theorem in connection with Theorem \ref{intGauss} allows one to know the limiting value of the intensity function for the random sum $P_n(z)$ when the random variables are scaled Gaussian, and also gives an upper bound for the limiting value of the intensity function in the non-Gaussian setting of Theorem \ref{mainthm}.

\section{The Proofs}
\subsection{Proof of Theorem \ref{denlammu}}

Since the coefficients $\eta_0, \eta_1,\dots, \eta_n$ satisfy \eqref{cs}, we have
\begin{align}
\nonumber
\E[P_n(x)]&=\E[P_n^{\ \prime}(x)]=0,\\
\nonumber
\E[P_n(x)^2]&=\sum_{k=0}^nf_k(x)^2=K_n(x,x),\\
\nonumber
\E[P_n(x)P_n^{\ \prime}(x)]&=\sum_{k=0}^nf_k(x)f_k^{\ \prime}(x)=K_n^{(0,1)}(x,x),\\
\label{Kn01}
\E[P_n^{\ \prime}(x)^2]&=\sum_{k=0}^nf_k^{\ \prime}(x)^2=K_n^{(1,1)}(x,x).
\end{align}
Following the method of Bleher and Di in \cite{BD}, we now rescale $P_n(x)$ and $P_n^{\ \prime}(x)$ as follows:
\begin{align}\nonumber
g_n(x)&:=\frac{P_n(x)}{\sqrt{K_n(x,x)}}=\sum_{k=0}^n\mu_k(x)\eta_k\\
\label{gn}
 \widetilde{g}_n(x)&:=\frac{P_n^{\
\prime}(x)}{\sqrt{K_n^{(1,1)}(x,x)}}=\sum_{k=0}^n\nu_k(x)\eta_k,
\end{align}
where
\begin{equation}\label{munu}
\mu_k(x)=\frac{f_k(x)}{\sqrt{K_n(x,x)}}\ \ \ \ \ \text{and}\ \ \ \ \ \nu_k(x)=\frac{f_k^{\ \prime}(x)}{\sqrt{K_n^{(1,1)}(x,x)}},\ \ \ \ k=0,1,\dots, n.
\end{equation}
Let $\widetilde{D}_n(\xi,\eta;x)$ be the joint distribution density of $g_n(x)$ and $\widetilde{g}_n(x)$.  By a change of variables we have
$$D_n(\xi,\eta;x)=\frac{1}{\sqrt{K_n(x,x)K_n^{(1,1)}(x,x)}}\ \widetilde{D}_n\left(\frac{\xi}{\sqrt{K_n(x,x)}},\frac{\eta}{\sqrt{K_n^{(1,1)}(x,x)}};x\right),$$
so that the Kac-Rice equation \eqref{Kac-Rice} is now
\begin{equation}\label{density2}
\rho_n(x)=\sqrt{\frac{K_n^{(1,1)}(x,x)}{K_n(x,x)}}\int_{\R}|\eta|\ \widetilde{D}_n(0,\eta;x)\ d\eta.
\end{equation}
Observe that \eqref{munu} and \eqref{Kn01} give
\begin{equation}\label{munu21}
\sum_{k=0}^n\mu_k(x)^2=\sum_{k=0}^n\nu_k(x)^2=1,\ \ \ \text{and} \ \ \ \sum_{k=0}^n\mu_k(x)\nu_k(x)=\frac{K_n^{(0,1)}(x,x)}{\sqrt{K_n(x,x)K_n^{(1,1)}(x,x)}}.
\end{equation}
We now change the joint distribution density $\widetilde{D}_n(\xi,\eta;x)$ for $g_n(x)$ and $\widetilde{g}_n(x)$ to that of one for
$g_n(x)$ and $h_n(x)$, where
\begin{equation}\label{hn}
h_n(x):=\frac{\widetilde{g}_n(x)-(\nu(x),\mu(x))g_n(x)}{\tau_n(x)}=\sum_{k=0}^n\lambda_k(x)\eta_k,
\end{equation}
with
$$\mu(x)=(\mu_0(x),\dots,\mu_n(x)), \ \ \ \nu(x)=(\nu_0(x),\dots, \nu_n(x)),$$
$$ (\nu(x),\mu(x))=\E[g_n(x)\widetilde{g}_n(x)]= \sum_{k=0}^n\nu_k(x)\mu_k(x),$$
and
$$\lambda(x)=(\lambda_0(x),\dots,\lambda_n(x))=\frac{\nu(x)-(\nu(x),\mu(x))\mu(x)}{\tau_n(x)},$$
\begin{equation}\label{tau}
\tau_n(x)=||\nu(x)-(\nu(x),\mu(x))\mu(x)||=\left(\sum_{k=0}^n[\nu_k(x)-(\nu(x),\mu(x))\mu_k(x)]^2\right)^{1/2}.
\end{equation}
From the definitions of $\mu(x)$, $\nu(x)$, and $\lambda(x)$, it follows that
\begin{align}\label{lamsq}
\sum_{k=0}^n\lambda_k(x)^2&=\sum_{k=0}^n\left(\frac{\nu_k(x)-(\nu(x),\mu(x))\mu_k(x)}{\tau_n(x)}\right)^2
=1,
\end{align}
and
\begin{align}\label{lammu}
\sum_{k=0}^n\lambda_k(x)\mu_k(x)&=\sum_{k=0}^n\left(\frac{\nu_k(x)-(\nu(x),\mu(x))\mu_k(x)}{\tau_n(x)}  \right)\mu_k(x)
=0.
\end{align}
By the above calculations, along with the first equation in \eqref{munu21}, we achieve that the vectors $\mu(x)$ and $\lambda(x)$ satisfy equation \eqref{proplammu} of Theorem
\ref{denlammu}.

Let us also note
\begin{align}\nonumber
\tau_n(x)
&=\left(1-2(\nu(x),\mu(x))^2+ (\nu(x),\mu(x))^2 \right)^{1/2}\\
\nonumber
&=\sqrt{ 1-\frac{K_n^{(0,1)}(x,x)^2}{K_n(x,x)K_n^{(1,1)}(x,x)} }\\
\label{tau2}
&=\sqrt{\frac{K_n(x,x)K_n^{(1,1)}(x,x)- K_n^{(0,1)}(x,x)^2}{K_n(x,x)K_n^{(1,1)}(x,x) }}
\end{align}
and
\begin{align}
\nonumber
\lambda_k(x)&=\frac{\nu_k(x)-(\nu(x),\mu(x))\mu_k(x)}{\tau_n(x)}\\
\nonumber
&=\frac{\frac{f_k^{\
\prime}(x)}{\left(K_n^{(1,1)}(x,x)\right)^{1/2}}-\frac{K_n^{(0,1)}(x,x)}{\left(K_n(x,x)K_n^{(1,1)}(x,x)\right)^{1/2}}\frac{f_k(x)}{\left(K_n(x,x)\right)^{1/2}}}
{\sqrt{\frac{K_n(x,x)K_n^{(1,1)}(x,x)- K_n^{(0,1)}(x,x)^2}{K_n(x,x)K_n^{(1,1)}(x,x) }}}\\
\label{lambda2}
&=\frac{K_n(x,x)f_k^{\ \prime}(x)-K_n^{(0,1)}(x,x)f_k(x)}{\left[K_n(x,x)\left(K_n^{(1,1)}(x,x)K_n(x,x)-K_n^{(0,1)}(x,x)^2  \right)  \right]^{1/2}}.
\end{align}
Let $\widehat{D}_n(\xi,\eta;x)$ be the joint distribution density of $g_n(x)$ and $h_n(x)$.  By another change of variables we have
$$\widetilde{D}_n(\xi,\eta;x)=\frac{1}{\tau_n(x)}\ \widehat{D}_n\left(\xi,\frac{\eta-(\nu(x),\mu(x))\xi}{\tau_n(x)};x\right)$$
so that now the transformation of the Kac-Rice formula \eqref{density2} reduces to
\begin{equation}
\rho_n(x)=\sqrt{\frac{K_n^{(1,1)}(x,x)}{K_n(x,x)}}\ \tau_n(x)\int_{\R}|\eta|\ \widehat{D}_n(0,\eta;x)\ d\eta.
\end{equation}
Furthermore, since
\begin{align*}
\sqrt{\frac{K_n^{(1,1)}(x,x)}{K_n(x,x)}}\ \tau_n(x)
=\sqrt{\frac{K_n^{(1,1)}(x,x)K_n(x,x)-K_n^{(0,1)}(x,x)^2}{K_n(x,x)^2}}
=\mathcal K_n(x),
\end{align*}
we have completed the result of the theorem.

\subsection{Proof of Corollary \ref{intGauss}}

Assume for all $s\in \R$ we have that the common characteristic function for the i.i.d.~random variables $\{\eta_k\}$ is
$\phi(s)=\exp(-ax^2)$,
where $a\in (0,\infty)$ is a fixed number.  Note that in this case, the variance of the i.i.d.~ random variables $\{\eta_k\}$ is $2a$.  This adjustment makes
\begin{equation*}
\E[P_n(x)^2]=2aK_n(x,x),\quad \E[P_n(x)P_n^{\prime}(x)]=2aK_n^{(0,1)}(x,x),
\end{equation*}
and
$$ \E[P_n^{\prime}(x)^2]=2aK_n^{(1,1)}(x,x).$$
Thus in the formula for $\mathcal K_n(x)$ defined at \eqref{Kn}, this extra factor for the variance cancels out algebraically.

For $\widehat{D}_n(\xi,\eta;x)$ the joint distribution function of $g_n(x)$ and $h_n(x)$ given by the first equation in \eqref{gn} and \eqref{hn} respectively, we have that the characteristic function is
\begin{equation*}
\Phi_n(\alpha,\beta)=\int_{\R}\int_{\R}\widehat{D}_n(\xi,\eta;x)e^{i\alpha \xi+i\beta \eta}\ d\xi d\eta.
\end{equation*}
Let $\omega_k=\alpha \mu_k(x)+\beta\lambda_k(x)$, $k\in \{0,1,\dots,n\}$.  By the properties \eqref{proplammu} in Theorem \ref{denlammu}, it follows that  $\sum_{j=0}^n\omega_k^2=\alpha^2+\beta^2:=|\gamma|^2$, with $\gamma=(\alpha,\beta)$.  Using the above assumption that characteristic function $\phi$ is a scaled mean zero Gaussian, and the assumption that the random variables are i.i.d.~we see that
\begin{equation*}
\Phi_n(\gamma)=\prod_{k=0}^n\phi(\omega_k)= \exp\left(-\sum_{k=0}^na\omega_k^2\right)= \exp(-a|\gamma|^2).
\end{equation*}

From the result of Theorem \ref{denlammu}, to complete the desired equality it suffices to compute
\begin{equation*}
\int_{\R}|\eta|\widehat{D}_n(0,\eta;x)\ d\eta,
\quad
\text{where}\quad
\widehat{D}_n(0,\eta;x)=\frac{1}{(2\pi)^2}\int_{\R^2}\exp(-i\beta \eta)\Phi_n(\gamma)\ d\gamma.
\end{equation*}
To this end, observe that
\begin{align*}
\widehat{D}_n(0,\eta;x)
&=\frac{1}{4a\pi}\exp\left(\frac{-\eta^2}{4a}\right).
\end{align*}
Therefore
\begin{align*}
\int_{\R}|\eta|\widehat{D}_n(0,\eta;x)\ d\eta=\frac{1}{4a\pi}\int_{\R}|\eta|\exp\left(\frac{-\eta^2}{4a}\right)\ d\eta
&=\frac{1}{\pi},
\end{align*}
and hence giving the desired result.

\subsection{Proof of Theorem \ref{mainthm}}
As in the proof of Theorem \ref{intGauss}, since the random variables $\{\eta_k\}$ are independent and
$$g_n(x)=\sum_{k=0}^n\mu_k(x)\eta_k\ \ \ \text{and}\ \ \ h_n(x)=\sum_{k=0}^n\lambda_k(x)\eta_k,$$
the joint distribution function $\widehat{D}_n(\xi,\eta;x)$ of $g_n$ and $h_n$ has characteristic function satisfying
\begin{equation}\label{Phiprod}
\Phi_n(\gamma)=\Phi_n(\alpha,\beta)=\int_{\R}\int_{\R}\widehat{D}_n(\xi,\eta;x)e^{i\alpha \xi+i\beta \eta}\ d\xi d\eta=\prod_{k=0}^n\phi(\omega_k),
\end{equation}
where $\gamma=(\alpha,\beta)$ and
$\omega_k=\mu_k(x)\alpha+\lambda_k(x)\beta$,  $k=0,\dots,n.$

We now present our version of Lemma 4.1 from \cite{BD}.  The main differences in the presented proof of the lemma is that we do not require $\omega_k^2=\mathcal O(n^{-1/2}|\gamma|^2)$, and we give a slightly different partition of the index set $\{k:k=0,1,\dots, n\}$ for the product \eqref{Phiprod}.
\begin{lemma}\label{lem4.1}
If $\phi(s)$ satisfies \eqref{hyphi1}, then
\begin{equation}
|\Phi_n(\gamma)|\leq \frac{1}{(1+a_0|\gamma|^2)^L},
\end{equation}
with $a_0=a/T$ and $L=Tq$, where  $2\leq T \leq n+1$, is the number of partitions of the index set $\{k:k=0,1,\dots, n\}$ constructed in the proof.
\end{lemma}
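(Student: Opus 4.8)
The plan is to reduce the claim to an elementary statement about the nonnegative numbers $\omega_k^2$ and then exploit superadditivity of products of the form $\prod(1+a\,\omega_k^2)$. First I would record the two facts that drive everything: by the product formula \eqref{Phiprod} together with the hypothesis \eqref{hyphi1},
$$|\Phi_n(\gamma)|=\prod_{k=0}^n|\phi(\omega_k)|\leq \prod_{k=0}^n\frac{1}{(1+a\,\omega_k^2)^{q}},$$
while the orthonormality relations \eqref{proplammu} give $\sum_{k=0}^n\omega_k^2=\alpha^2+\beta^2=|\gamma|^2$, where $\gamma=(\alpha,\beta)$. Raising the desired bound to the power $-q$ and using $L=Tq$, the lemma is then equivalent to the purely real-variable inequality $\prod_{k=0}^n(1+a\,\omega_k^2)\geq(1+a_0|\gamma|^2)^{T}$, so it suffices to bound this product from below.

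Next I would construct the partition. Writing $S:=|\gamma|^2$ and viewing the partial sums $\sigma_m:=\sum_{k=0}^m\omega_k^2$ as a nondecreasing sequence climbing from $0$ to $S$, I would select cut indices $0=m_0<m_1<\dots<m_T=n+1$ of this cumulative energy so that each consecutive increment $\sigma_{m_j}-\sigma_{m_{j-1}}$ is at least $S/T$; the largest number of cuts $T$ that can be sustained in this way — lying between $2$ and $n+1$ — is precisely the integer $T$ of the statement. Concretely this is a greedy rule: append consecutive indices to the current block until its accumulated energy first meets or exceeds the threshold $S/T$, then close it, merging any deficient final block into its predecessor so that every block $B_1,\dots,B_T$ carries energy $\sum_{k\in B_j}\omega_k^2\geq S/T$. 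This is exactly the step where dropping the Bleher--Di smallness condition $\omega_k^2=\mathcal O(n^{-1/2}|\gamma|^2)$ is felt, since when a single $\omega_k^2$ is comparable to $S$ the greedy rule must be permitted to terminate with few blocks; I expect the bookkeeping that simultaneously certifies the per-block energy lower bound and the admissible range $2\le T\le n+1$ to be the main obstacle.

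With the partition in hand the conclusion is immediate from the elementary estimate $\prod_{k\in B_j}(1+a\,\omega_k^2)\geq 1+a\sum_{k\in B_j}\omega_k^2$, valid because expanding the product leaves $1+a\sum_{k\in B_j}\omega_k^2$ together with nonnegative cross terms. Since each block satisfies $\sum_{k\in B_j}\omega_k^2\geq S/T$, every factor is at least $1+a\,S/T=1+a_0|\gamma|^2$, whence
$$\prod_{k=0}^n(1+a\,\omega_k^2)=\prod_{j=1}^{T}\prod_{k\in B_j}(1+a\,\omega_k^2)\geq\prod_{j=1}^{T}\bigl(1+a_0|\gamma|^2\bigr)=\bigl(1+a_0|\gamma|^2\bigr)^{T}.$$
Raising to the power $-q$ and recalling $L=Tq$ then yields $|\Phi_n(\gamma)|\leq(1+a_0|\gamma|^2)^{-L}$, which is the assertion. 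The only genuinely nontrivial ingredient is the partition of the preceding paragraph; the remaining analytic steps are the superadditivity inequality and the monotonicity of $t\mapsto t^{-q}$.
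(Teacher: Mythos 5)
Your proposal follows essentially the same route as the paper's proof: the reduction via \eqref{Phiprod} and \eqref{hyphi1} to bounding $\prod_{k=0}^n(1+a\omega_k^2)$ from below, the use of $\sum_{k=0}^n\omega_k^2=|\gamma|^2$ from \eqref{proplammu}, a partition of $\{0,1,\dots,n\}$ into $T$ blocks each carrying energy at least $|\gamma|^2/T$, and the superadditivity inequality $\prod_{k\in B_j}(1+a\omega_k^2)\geq 1+a\sum_{k\in B_j}\omega_k^2$ followed by raising to the power $-q$. The only difference is cosmetic: you make the partition explicit via a greedy rule on the partial sums, whereas the paper simply asserts that such a partition exists.

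The obstacle you flag is, however, a genuine gap, and you should not expect the bookkeeping to close it. Write $S=|\gamma|^2$ and suppose $T$ blocks partition the index set with every block carrying energy at least $S/T$; summing over the blocks gives $S\geq T\cdot(S/T)=S$, so equality is forced and every block must carry \emph{exactly} $S/T$. A greedy rule on discrete increments cannot achieve this in general, and the threshold $S/T$ is in any case circular, since $T$ is only known after the greedy pass terminates. Worse, the desired conclusion can simply fail to admit any $T\geq 2$: the orthonormality relations \eqref{proplammu} do not prevent a single $\omega_{k_0}^2$ from exceeding $S/2$ (the vector $\omega=\alpha\mu(x)+\beta\lambda(x)$ is an arbitrary vector of norm $|\gamma|$ in the $2$-plane spanned by $\mu(x)$ and $\lambda(x)$, and that plane may contain a coordinate direction), in which case no partition into two or more blocks each with energy $\geq S/T$ exists and only the trivial bound $\prod_k(1+a\omega_k^2)\geq 1+aS$, i.e.\ $T=1$, survives. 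This is precisely what the hypothesis $\omega_k^2=\mathcal O(n^{-1/2}|\gamma|^2)$ buys in \cite{BD}, and discarding it is not free: one must either reinstate some smallness condition on the $\omega_k$, or weaken the per-block requirement (say to energy $\geq S/(2T)$, adjusting $a_0$ accordingly) and then actually verify that at least two blocks are produced, since the downstream integrals in Lemma \ref{lem4.3} require $L>3/2$. You should be aware that the paper's own proof asserts the partition in the same words without constructing it, so your proposal reproduces the published argument together with this unresolved step.
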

\begin{proof}
Using the assumption \eqref{hyphi1} on the common characteristic function $\phi(s)$ of $\{\eta_k\}$ and \eqref{Phiprod} we have
\begin{equation}\label{Phiprod2}
|\Phi_n(\gamma)|\leq \prod_{k=0}^n\frac{1}{(1+a\omega_k^2)^q}.
\end{equation}

Since the relations \eqref{proplammu} of Theorem \ref{denlammu} and the definition of $\omega_k$ give
$\sum_{k=0}^n\omega_k^2=\alpha^2+\beta^2=|\gamma|^2$, we partition the index set $\{k:k=0,1\dots, n\}$ into $T$ groups, with  $2\leq T \leq n+1$, which we call $M_j$ to give on each group the following
\begin{equation*}
\sum_{k\in M_j}\omega_k^2\geq \frac{1}{T}|\gamma|^2.
\end{equation*}
The partition and estimate yield
\begin{equation*}
1+\frac{a}{T}|\gamma|^2\leq 1+a\sum_{k\in M_j}\omega_k^2\leq \prod_{k\in M_j}(1+a\omega_k^2).
\end{equation*}
Thus using the above bound and \eqref{Phiprod2},  we have
\begin{align*}
|\Phi_n(\gamma)|&\leq \prod_{j=1}^T\prod_{k\in M_j}\frac{1}{(1+a\omega_k^2)^q}\\
&\leq
\prod_{j=1}^T\frac{1}{(1+\frac{a}{T}|\gamma|^2)^q}\\
&=\frac{1}{(1+\frac{a}{T}|\gamma|^2)^{Tq}}\\
&=\frac{1}{(1+a_0|\gamma|^2)^{L}},
\end{align*}
where $a_0=a/T$ and taking $L=qT$.
\end{proof}

Our next needed lemmas are modifications of Lemma 4.2 and Lemma 4.3 of \cite{BD}.  The modifications allow us to keep track of all constants.  The first lemma is already given at equation (4.15) of \cite{BD}.
\begin{lemma}[Bleher and Di, Lemma 4.2 \cite{BD}]\label{lem4.21}
If $\phi(s)$ satisfies \eqref{hyphi1} and \eqref{hyphi2}, then
\begin{equation}\label{Phi1}
\left| \frac{\partial\Phi_n(\gamma)}{\partial \beta} \right|\leq \frac{C_2|\gamma|}{(1+a_0|\gamma|^2)^L}.
\end{equation}
\end{lemma}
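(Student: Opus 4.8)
The plan is to differentiate the product representation $\Phi_n(\gamma)=\prod_{k=0}^n\phi(\omega_k)$ from \eqref{Phiprod} directly, using that each $\omega_k=\mu_k(x)\alpha+\lambda_k(x)\beta$ depends on $\beta$ only through the coefficient $\lambda_k(x)$. Since $\partial\omega_k/\partial\beta=\lambda_k(x)$, the product rule gives
\begin{equation*}
\frac{\partial\Phi_n(\gamma)}{\partial\beta}=\sum_{k=0}^n\lambda_k(x)\,\phi'(\omega_k)\prod_{j\neq k}\phi(\omega_j),
\qquad\text{so}\qquad
\left|\frac{\partial\Phi_n(\gamma)}{\partial\beta}\right|\le\sum_{k=0}^n|\lambda_k(x)|\,|\phi'(\omega_k)|\prod_{j\neq k}|\phi(\omega_j)|.
\end{equation*}

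First I would control the derivative factor $|\phi'(\omega_k)|$. Because the $\{\eta_k\}$ have mean zero we have $\phi'(0)=i\,\E[\eta_k]=0$, and hypothesis \eqref{hyphi2} bounds $|\phi''|$ by $C_2$; writing $\phi'(s)=\int_0^s\phi''(u)\,du$ then yields the linear estimate $|\phi'(s)|\le C_2|s|$, whence $|\phi'(\omega_k)|\le C_2|\omega_k|$. This is the step in which the normalization \eqref{cs} and the second-derivative hypothesis enter, and it is what produces the leading constant $C_2$.

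Next I would bound the surviving product $\prod_{j\neq k}|\phi(\omega_j)|$ by reusing the partition of $\{0,1,\dots,n\}$ into the $T$ groups $M_1,\dots,M_T$ built in the proof of Lemma \ref{lem4.1}. If $M_{j_0}$ is the group containing the differentiated index $k$, I use the trivial bound $|\phi(\omega_\ell)|\le 1$ on $M_{j_0}$, while on each of the $T-1$ remaining complete groups the very estimate of Lemma \ref{lem4.1} gives $\prod_{\ell\in M_j}|\phi(\omega_\ell)|\le(1+a_0|\gamma|^2)^{-q}$, so that $\prod_{j\neq k}|\phi(\omega_j)|\le(1+a_0|\gamma|^2)^{-(T-1)q}$ uniformly in $k$; since only a fixed large polynomial order is needed in the applications (and $T$ may be enlarged), this delivers the decay $(1+a_0|\gamma|^2)^{-L}$ asserted in \eqref{Phi1}. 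Finally I would assemble the pieces and remove the $k$-dependence by Cauchy--Schwarz: pulling the uniform product bound out of the sum and using $\sum_k\omega_k^2=|\gamma|^2$ together with $\sum_k\lambda_k(x)^2=1$ from \eqref{proplammu},
\begin{equation*}
\left|\frac{\partial\Phi_n(\gamma)}{\partial\beta}\right|\le\frac{C_2}{(1+a_0|\gamma|^2)^{L}}\sum_{k=0}^n|\omega_k|\,|\lambda_k(x)|\le\frac{C_2}{(1+a_0|\gamma|^2)^{L}}\Big(\sum_{k=0}^n\omega_k^2\Big)^{1/2}\Big(\sum_{k=0}^n\lambda_k(x)^2\Big)^{1/2}=\frac{C_2|\gamma|}{(1+a_0|\gamma|^2)^{L}},
\end{equation*}
which is exactly \eqref{Phi1}.

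I expect the main obstacle to lie in the third step. Differentiating destroys the factor $\phi(\omega_k)$, so the partition estimate of Lemma \ref{lem4.1} cannot be applied to the group $M_{j_0}$ that contains $k$; indeed $|\omega_k|\prod_{\ell\in M_{j_0}\setminus\{k\}}|\phi(\omega_\ell)|$ carries no decay of its own in the extreme case where $\omega_k$ holds essentially all of that group's $\ell^2$-mass. Consequently the polynomial decay must be furnished entirely by the $T-1$ untouched groups, and the careful accounting of how much of the power of $(1+a_0|\gamma|^2)$ survives—precisely the bookkeeping that, unlike in \cite{BD}, must be made explicit here to keep track of the constants—is the delicate part of the argument.
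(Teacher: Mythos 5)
First, a point of comparison: the paper does not actually prove this lemma --- it is imported verbatim from equation (4.15) of \cite{BD} --- so the closest in-paper analogue of your argument is the proof of Lemma \ref{lem4.22}, which follows exactly your template: product rule, the estimate $|\phi'(s)|\le C_2|s|$ obtained from $\phi'(0)=i\,\E[\eta_k]=0$ together with \eqref{hyphi2}, a uniform bound on the residual product $\prod_{j\neq k}|\phi(\omega_j)|$, and Cauchy--Schwarz with $\sum_k\omega_k^2=|\gamma|^2$ and $\sum_k\lambda_k^2=1$. Your first, second, and fourth steps are correct and are the standard route.

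The genuine gap is the one you flag yourself and then do not close: the claim $\prod_{j\neq k}|\phi(\omega_j)|\le(1+a_0|\gamma|^2)^{-L}$ with $L=Tq$. Deleting the differentiated index $k$ breaks the group $M_{j_0}\ni k$, and if $\omega_k^2$ carries essentially all of $\sum_{\ell\in M_{j_0}}\omega_\ell^2$ that group contributes no decay, so your argument only delivers the exponent $(T-1)q$, yet your final display asserts $L=Tq$. The proposed remedy (``$T$ may be enlarged'') does not repair this, because $a_0=a/T$ and $L=Tq$ are both pinned to the same $T$ fixed in Lemma \ref{lem4.1}: increasing $T$ shrinks $a_0$ and moves the target. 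Nor is the loss harmless downstream: with $T=2$ and $q=1$ the weakened exponent is $(T-1)q=1$, and $\int_{\R^2}|\gamma|\,(1+a_0|\gamma|^2)^{-1}\,d\gamma$ diverges, so the integrals in Lemma \ref{lem4.3} would not close. In \cite{BD} this step is protected by the extra hypothesis $\omega_k^2=\mathcal{O}(n^{-1/2}|\gamma|^2)$, which prevents any single index from dominating its group; the present paper explicitly drops that hypothesis (and asserts the same product bound without justification inside the proof of Lemma \ref{lem4.22}). So the difficulty you identified is real, and a complete proof must either reinstate a uniform smallness condition on $\max_k\omega_k^2/|\gamma|^2$, re-partition $\{0,\dots,n\}\setminus\{k\}$ with new constants, or restate the lemma with exponent $(T-1)q$ and propagate that change through Lemma \ref{lem4.3} and Theorem \ref{mainthm}.
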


\begin{lemma}\label{lem4.22}
If $\phi(s)$ satisfies \eqref{hyphi1} and \eqref{hyphi2}, then
\begin{equation}\label{Phi3}
\left| \frac{\partial^3\Phi_n(\gamma)}{\partial \beta^3} \right|\leq \frac{C_3}{(1+a_0|\gamma|^2)^L}+\frac{2C_2^2|\gamma|}{(1+a_0|\gamma|^2)^L}.
\end{equation}
\end{lemma}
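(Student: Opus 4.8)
The plan is to differentiate the product representation $\Phi_n(\gamma)=\prod_{k=0}^n\phi(\omega_k)$ from \eqref{Phiprod} three times in $\beta$, using that $\partial_\beta\omega_k=\lambda_k(x)$, and then to bound the result termwise. Applying the product rule and collecting terms according to how the three $\beta$-derivatives are distributed among the factors, I would write $\partial_\beta^3\Phi_n$ as a sum of three types of terms: a ``$(3)$'' term $\sum_k\lambda_k^3\phi'''(\omega_k)\prod_{j\ne k}\phi(\omega_j)$, in which all three derivatives land on one factor; a ``$(2,1)$'' term $3\sum_{k\ne l}\lambda_k^2\lambda_l\,\phi''(\omega_k)\phi'(\omega_l)\prod_{j\ne k,l}\phi(\omega_j)$; and a ``$(1,1,1)$'' term $6\sum_{k<l<m}\lambda_k\lambda_l\lambda_m\,\phi'(\omega_k)\phi'(\omega_l)\phi'(\omega_m)\prod_{j\ne k,l,m}\phi(\omega_j)$. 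In each term only finitely many (at most three) factors are differentiated, the remaining indices appearing as the undifferentiated $\phi(\omega_j)$.

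Next I would assemble the pointwise estimates on $\phi$. Because $\phi$ is a characteristic function, $|\phi(s)|\le 1$ and $|\phi'(s)|\le\E|\eta_k|\le 1$; the mean-zero condition in \eqref{cs} gives $\phi'(0)=0$, so integrating $\phi''$ and invoking \eqref{hyphi2} yields the crucial bound $|\phi'(s)|\le C_2|s|$, while \eqref{hyphi2} directly supplies $|\phi''(s)|\le C_2$ and $|\phi'''(s)|\le C_3$. From \eqref{proplammu} I have $\sum_k\lambda_k^2=1$ (hence $|\lambda_k|\le 1$ and $\sum_k|\lambda_k|^3\le 1$), and from the definition of $\omega_k$ together with \eqref{proplammu} I have $\sum_k\omega_k^2=|\gamma|^2$.

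I would then extract the decay factor $(1+a_0|\gamma|^2)^{-L}$ from the undifferentiated part of each term exactly as in Lemma~\ref{lem4.1} and its first-derivative refinement Lemma~\ref{lem4.21}: the partition of $\{0,\dots,n\}$ into the $T$ groups constructed there still controls $\prod_j\phi(\omega_j)$ after a bounded number of factors have been replaced by derivatives. For the $(3)$-type term this gives, using $|\phi'''|\le C_3$ and $\sum_k|\lambda_k|^3\le 1$, a contribution at most $C_3(1+a_0|\gamma|^2)^{-L}$. For the mixed terms I would use $|\phi''|\le C_2$ and $|\phi'(\omega)|\le C_2|\omega|$ together with the Cauchy--Schwarz inequality $\sum_k|\lambda_k\omega_k|\le(\sum_k\lambda_k^2)^{1/2}(\sum_k\omega_k^2)^{1/2}=|\gamma|$ to produce the factor $|\gamma|$ and the powers of $C_2$, collecting the $(2,1)$ and $(1,1,1)$ contributions into the single term $2C_2^2|\gamma|(1+a_0|\gamma|^2)^{-L}$. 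Adding the two pieces would give \eqref{Phi3}.

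The main obstacle is twofold and lies entirely in the mixed terms. First, I must retain the full decay power $L$ even though up to three factors have been differentiated and individually carry no decay; this is precisely where the robustness of the partition argument of Lemma~\ref{lem4.1} is essential, and it is the reason for re-deriving that partition rather than naively dividing factors out of the product. Second, and more delicate, is the bookkeeping needed to land exactly on the constant $2C_2^2$ rather than on a larger bound: if each $\phi'$ in the $(1,1,1)$ term is estimated by $C_2|\omega|$, that term threatens to generate higher powers of $|\gamma|$, so I would instead combine it with the $(2,1)$ term and use the alternative bound $|\phi'|\le 1$ for large arguments, so that only a single power of $|\gamma|$ survives. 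This combination, which refines Lemma~4.3 of \cite{BD} by making every constant explicit, is the step I expect to require the most care.
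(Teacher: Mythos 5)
Your Leibniz expansion of $\partial_\beta^3\Phi_n$ is the complete one: a $(3)$ term, a $(2,1)$ term with multiplicity $3$, and a $(1,1,1)$ term with multiplicity $6$. The paper's proof expands differently: it records only the $\phi'''$ term and two copies of the $\phi''\phi'$ cross term (its displays \eqref{Phi32} and \eqref{Phi33} are the same sum after relabeling $i\leftrightarrow k$), so it works with multiplicity $2$ on the cross term and with no triple-$\phi'$ term at all; that is exactly where the constant $2C_2^2$ in \eqref{Phi3} comes from. On the terms you share with the paper your estimates coincide with its: $\sum_k|\lambda_k|^3\le 1$ and $|\phi'''|\le C_3$ give the $C_3$ piece, while $|\phi''|\le C_2$, the bound $|\phi'(s)|\le C_2|s|$ (from $\phi'(0)=0$ and \eqref{hyphi2}), and Cauchy--Schwarz give $C_2^2|\gamma|$ per cross term --- but with the correct multiplicity $3$ this already yields $3C_2^2|\gamma|$, not $2C_2^2|\gamma|$.

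The genuine gap is the $(1,1,1)$ term, which your proposal names but never actually bounds. Estimating all three first derivatives by $|\phi'(\omega)|\le C_2|\omega|$ gives at best $C_2^3\bigl(\sum_k|\lambda_k\omega_k|\bigr)^3\le C_2^3|\gamma|^3$, the wrong power of $|\gamma|$ for \eqref{Phi3}; your alternative of using $|\phi'|\le 1$ on some of the factors replaces $\sum_k|\lambda_k\omega_k|$ by $\sum_k|\lambda_k|$, which is bounded only by $\sqrt{n+1}$ and so destroys uniformity in $n$. No combination of these two pointwise bounds produces a contribution of the form $c\,C_2^2|\gamma|(1+a_0|\gamma|^2)^{-L}$ with $c$ absolute, so the plan as written cannot land on the stated inequality; what it does yield is a bound of the shape $\bigl(C_3+3C_2^2|\gamma|+C_2^3|\gamma|^3\bigr)(1+a_0|\gamma|^2)^{-L}$, which is still integrable in $\gamma$ for $L$ large and would only alter the explicit constants downstream in Lemma \ref{lem4.3} and Theorem \ref{mainthm}. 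A secondary point, common to your sketch and the paper: extracting the full factor $(1+a_0|\gamma|^2)^{-L}$ from $\prod_{l\neq i,k}\phi(\omega_l)$ requires the groups $M_j$ of Lemma \ref{lem4.1} to retain mass at least $|\gamma|^2/T$ after the differentiated indices are deleted; this is fixable (e.g.\ by building the partition to tolerate the removal of three indices) but is asserted rather than proved in both arguments.
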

\begin{proof}
Observe that
\begin{align}\nonumber
\frac{\partial^3}{\partial \beta^3}\Phi_n(\gamma)&=\frac{\partial^3}{\partial \beta^3}\prod_{k=0}^n\phi(\omega_k)\\
\label{Phi31}
&=\sum_{k=0}^n\lambda_k^3\phi^{\prime \prime \prime}(\omega_k)\prod_{l\neq k}\phi(\omega_l) \\
\label{Phi32}
&\ \ \ +\sum_{k=0}^n\sum_{i\neq k}\lambda_i^2\lambda_k\phi^{\prime \prime}(\omega_i)\phi^{\prime}(\omega_k)\prod_{l\neq i,k}\phi(\omega_l)\\
\label{Phi33}
&\ \ \ +\sum_{k=0}^n\sum_{i\neq k}\lambda_i\lambda_k^2\phi^{ \prime}(\omega_i)\phi^{\prime \prime}(\omega_k)\prod_{l\neq i,k}\phi(\omega_l).
\end{align}
As in proof of Lemma \ref{lem4.1} it follows that
\begin{equation*}
\left| \prod_{l\neq k}\phi(\omega_l) \right|,\left| \prod_{l\neq i,k}\phi(\omega_l) \right|\leq \frac{1}{(1+a_0|\gamma|^2)^L}.
\end{equation*}
Using the above and the estimate on $\phi^{\prime \prime \prime}$ in \eqref{hyphi2} we achieve
\begin{align}\nonumber
\left|\sum_{k=0}^n\lambda_k^3\phi^{\prime \prime \prime}(\omega_k)\prod_{l\neq k}\phi(\omega_l)\right|
&\leq \frac{C_3}{(1+a_0|\gamma|^2)^L}\sum_{k=0}^n|\lambda_k^3| \\
\nonumber
&\leq \frac{C_3}{(1+a_0|\gamma|^2)^L}\left(\sum_{k=0}^n\lambda_k^2\right)^2\\
\label{Phi311}
&= \frac{C_3}{(1+a_0|\gamma|^2)^L},
\end{align}
where we have appealed to Cauchy Schwarz in the second inequality and fact that the $\lambda_k$'s are normalized in the last equation.

To complete the estimates, let us first note that since $\E[\eta_k]=0$ for $k=0,\dots,n$, the characteristic function $\phi(s)$ of $\eta_k$ satisfies $\phi^{\prime}(0)=0$.
Hence under the assumption of \eqref{hyphi2}, this gives that for $s\in \R$ we have
$| d\phi(s)/ds|\leq C_2|s|.$  With this in mind, estimating as previously done gives
\begin{align}\nonumber
\left| \sum_{k=0}^n\sum_{i\neq k}\lambda_i^2\lambda_k\phi^{\prime \prime}(\omega_i)\phi^{\prime}(\omega_k)\prod_{l\neq i,k}\phi(\omega_l)  \right|&\leq
\frac{C_2^2}{(1+a_0|\gamma|^2)^L}\sum_{k=0}\sum_{i\neq k}|\lambda_i^2\lambda_k\omega_k|\\
\nonumber
&\leq \frac{C_2^2}{(1+a_0|\gamma|^2)^L}\sum_{k=0}|\lambda_k\omega_k|\sum_{i=0}^n\lambda_i^2\\
\nonumber
& \leq \frac{C_2^2}{(1+a_0|\gamma|^2)^L}\left(\sum_{k=0}^n\lambda_k^2 \ \sum_{j=0}^n\omega_j^2\right)^{1/2}\\
\label{Phi321}
&=\frac{C_2^2|\gamma|}{(1+a_0|\gamma|^2)^L}.
\end{align}
The estimate for \eqref{Phi33} is done similarly and has the same bound as the above. Combining \eqref{Phi311} and twice that of \eqref{Phi321} gives the desired \eqref{Phi3}.
\end{proof}

\begin{lemma}\label{lem4.3}
If $\phi(s)$ satisfies \eqref{hyphi1} and \eqref{hyphi2}, then
\begin{equation}\label{D1}
\left| \widehat{D}_n(0,\eta;x) \right|\leq \frac{K_1}{(1+|\eta|)},\quad K_1=\frac{1}{2\pi aq}+\frac{C_2}{(2aq)^{3/2}},
\end{equation}
and
\begin{equation}\label{D3}
\left| \widehat{D}_n(0,\eta;x) \right|\leq \frac{K_2}{(1+|\eta|^3)}, \quad
K_2=\frac{1}{2\pi aq}+\frac{C_2^2}{\sqrt{2}(aq)^{3/2}}+\frac{C_3}{2\pi aq}.
\end{equation}
\end{lemma}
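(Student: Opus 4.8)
The plan is to start from the Fourier inversion formula for the section $\widehat{D}_n(0,\eta;x)$ of the joint density, namely
\[
\widehat{D}_n(0,\eta;x)=\frac{1}{(2\pi)^2}\int_{\R^2}e^{-i\beta\eta}\,\Phi_n(\gamma)\,d\gamma,
\]
and to extract decay in $\eta$ by repeatedly integrating by parts in the variable $\beta$. For the bound \eqref{D1} I would combine two estimates. The first is crude: $|\widehat{D}_n(0,\eta;x)|\le (2\pi)^{-2}\int_{\R^2}|\Phi_n(\gamma)|\,d\gamma$, to which Lemma \ref{lem4.1} applies directly. The second extracts a factor $|\eta|^{-1}$: since $e^{-i\beta\eta}=\frac{i}{\eta}\,\partial_\beta e^{-i\beta\eta}$, one integration by parts in $\beta$ (the boundary terms vanishing by the decay of $\Phi_n$ from Lemma \ref{lem4.1}) gives $|\eta|\,|\widehat{D}_n(0,\eta;x)|\le (2\pi)^{-2}\int_{\R^2}|\partial_\beta\Phi_n(\gamma)|\,d\gamma$, which is controlled by Lemma \ref{lem4.21}. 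Adding the two estimates yields $(1+|\eta|)\,|\widehat{D}_n(0,\eta;x)|\le A+B=:K_1$, where $A$ and $B$ are the two planar integrals above.

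For the bound \eqref{D3} I would run the same scheme but integrate by parts three times, using $e^{-i\beta\eta}=\frac{-i}{\eta^3}\,\partial_\beta^3 e^{-i\beta\eta}$, so that $|\eta|^3\,|\widehat{D}_n(0,\eta;x)|\le (2\pi)^{-2}\int_{\R^2}|\partial_\beta^3\Phi_n(\gamma)|\,d\gamma$; here the boundary terms vanish because $\Phi_n$ together with its first two $\beta$-derivatives decay, and the right-hand side is estimated by Lemma \ref{lem4.22}. Combining this with the crude bound once more gives $(1+|\eta|^3)\,|\widehat{D}_n(0,\eta;x)|\le A+B_3=:K_2$, where $B_3$ is the integral of the bound supplied by Lemma \ref{lem4.22}, namely the $C_3$-term plus twice the $C_2^2$-term.

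It then remains to evaluate the planar integrals explicitly. Passing to polar coordinates $\gamma=(r\cos\theta,r\sin\theta)$ reduces each to a one-dimensional radial integral: the $|\Phi_n|$-integral becomes a multiple of $\int_0^\infty r(1+a_0r^2)^{-L}\,dr=\frac{1}{2a_0(L-1)}$, while the $|\gamma|\,|\Phi_n|$-integral becomes a multiple of $\int_0^\infty r^2(1+a_0r^2)^{-L}\,dr=a_0^{-3/2}\int_0^\infty u^2(1+u^2)^{-L}\,du$. Using $L=Tq$ and $a_0=a/T$, so that $a_0L=aq$, together with the elementary inequalities $L-1\ge L/2$ and
\[
\int_0^\infty \frac{u^2}{(1+u^2)^L}\,du\le \frac{\pi}{\sqrt2\,L^{3/2}},\qquad L\ge 2,
\]
converts the radial integrals into the closed forms $\tfrac{1}{2\pi aq}$, $\tfrac{C_2}{(2aq)^{3/2}}$, $\tfrac{C_3}{2\pi aq}$, and $\tfrac{C_2^2}{\sqrt2\,(aq)^{3/2}}$; collecting terms produces exactly the stated $K_1$ and $K_2$.

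The main obstacle I anticipate is the bookkeeping needed to land on these explicit constants, and in particular establishing the displayed radial inequality uniformly in $L\ge 2$: its exact value is $\tfrac{\sqrt\pi}{4}\,\Gamma(L-\tfrac32)/\Gamma(L)$, and one must check that this is dominated by $\tfrac{\pi}{\sqrt2}L^{-3/2}$ for every admissible $L$, with equality forced at the extremal value $L=2$ (where both sides equal $\pi/4$). Verifying that $L^{3/2}\,\Gamma(L-\tfrac32)/\Gamma(L)$ is nonincreasing, so that $L=2$ is genuinely the worst case, and confirming that $L-1\ge L/2$ is the operative simplification, are the points where care is required; by contrast the Fourier inversion and the integrations by parts are routine once the decay of $\Phi_n$ and its $\beta$-derivatives, furnished by Lemmas \ref{lem4.1}, \ref{lem4.21}, and \ref{lem4.22}, justifies discarding the boundary terms.
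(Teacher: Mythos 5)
Your proposal is correct and follows essentially the same route as the paper: Fourier inversion combined with the identity $\eta^k\widehat{D}_n(0,\eta;x)=\frac{(-i)^k}{(2\pi)^2}\int_{\R^2}e^{-i\beta\eta}\partial_\beta^k\Phi_n(\gamma)\,d\gamma$ (which the paper obtains by differentiating the characteristic function rather than integrating by parts, an equivalent computation), the crude bound from Lemma \ref{lem4.1} added to the first- and third-derivative bounds from Lemmas \ref{lem4.21} and \ref{lem4.22}, and polar coordinates reducing everything to $\frac{1}{4\pi a_0(L-1)}$ and a Beta-function integral. Your two "care points" are exactly the paper's estimates \eqref{est1} and \eqref{est3}: $Tq/(Tq-1)\le 2$ for $Tq\ge 2$, and the monotonicity of $x^{3/2}\Gamma(x-3/2)/\Gamma(x)$ on $[2,\infty)$ with the extremal value at $x=2$, so your constants land on the stated $K_1$ and $K_2$.
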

\begin{proof}
Since
$$\Phi_n(\gamma)=\int_{-\infty}^{\infty}\int_{-\infty}^{\infty}\widehat{D}_n(\xi,\eta;x)e^{i\alpha \xi + i\beta \eta}\ d\xi \ d\eta,$$
differentiating and using Fourier inversion gives
$$\eta^k\widehat{D}_n(0,\eta;x)=\frac{(-i)^k}{(2\pi)^2}\int_{\R^2}e^{-i\beta \eta}\frac{\partial^k \Phi_n(\gamma)}{\partial \beta^k}\ d\gamma.$$
Using the above with Lemma \ref{lem4.1} and Lemma \ref{lem4.21} yields
\begin{align}\nonumber
(1+|\eta|)|\widehat{D}_n(0,\eta;x)|&\leq \frac{1}{(2\pi)^2} \int_{\R}|\Phi_n(\gamma)|\ d\gamma+\frac{1}{(2\pi)^2}\int_{\R^2}\left|\frac{\partial \Phi_n(\gamma)}{\partial \beta} \right|\ d\gamma\\
\nonumber
&\leq \frac{1}{(2\pi)^2} \int_{\R^2}\frac{1}{(1+a_0|\gamma|^2)^L}\ d\gamma \\
\nonumber
& \ \ +\frac{C_2}{(2\pi)^2}\int_{\R^2}\frac{|\gamma|}{(1+a_0|\gamma|^2)^L}\ d\gamma\\
\label{Est1}
&=\frac{1}{4\pi a_0(L-1)}+\frac{C_2\Gamma(L-3/2)}{8\pi^{1/2}a_0^{3/2}\Gamma(L)},
\end{align}
where $\Gamma$ is the usual Gamma Function.

Similarly with aid of Lemma \ref{lem4.22}, we see
\begin{align}\nonumber
(1+|\eta|^3)|\widehat{D}_n(0,\eta;x)|&\leq \frac{1}{(2\pi)^2}\int_{\R}|\Phi_n(\gamma)|\ d\gamma+\frac{1}{(2\pi)^2}\int_{\R^2}\left|\frac{\partial^3 \Phi_n(\gamma)}{\partial \beta^3} \right|\
d\gamma\\
\nonumber
&\leq \frac{1}{(2\pi)^2}\int_{\R^2}\frac{1}{(1+a_0|\gamma|^2)^L}\ d\gamma \\
\nonumber
& \ \ +\frac{2C_2^2}{(2\pi)^2}\int_{\R^2}\frac{|\gamma|}{(1+a_0|\gamma|^2)^L}\ d\gamma\\
\nonumber
& \ \ +
\frac{C_3}{(2\pi)^2}\int_{\R^2}\frac{1}{(1+a_0|\gamma|^2)^L}\ d\gamma\\
\label{Est2}
&=\frac{1}{4\pi a_0(L-1)}+\frac{C_2^2\Gamma(L-3/2)}{4\pi^{1/2}a_0^{3/2}\Gamma(L)}+\frac{C_3}{4a_0\pi(L-1)}.
\end{align}

To complete the estimates, note that
\begin{align}\label{est1}
\frac{1}{4\pi a_0(L-1)}=\frac{ T}{4\pi a(Tq-1)}=\frac{1}{4\pi aq}\frac{Tq}{Tq-1}\leq \frac{1}{2\pi aq},
\end{align}
since the function $x/(x-1)$ is decreasing and $Tq\geq 2$ given that $T\geq 2$ along with the assumption that $q\geq 1$.  This estimate also gives
\begin{equation}\label{est2}
\frac{C_3}{4a_0\pi(L-1)}\leq \frac{C_3}{2aq \pi}.
\end{equation}
Also observe
\begin{equation}\label{est3}
\frac{\Gamma(L-3/2)}{\pi^{1/2}a_0^{3/2}\Gamma(L)}=\frac{T^{3/2}\Gamma(Tq-3/2)}{\pi^{1/2}a^{3/2}\Gamma(Tq)}=\frac{(Tq)^{3/2}\Gamma(Tq-3/2)}{\pi^{1/2}(aq)^{3/2}\Gamma(Tq)}
\leq \frac{2^{3/2}}{(aq)^{3/2}},
\end{equation}
by the function $x^{3/2}\Gamma(x-3/2)/\Gamma(x)$ being decreasing on $[2,\infty)$, and at $x=2$ evaluating to $2\sqrt{2\pi}$.

Combining \eqref{est1}, \eqref{est2}, and \eqref{est3} with \eqref{Est1} and \eqref{Est2}, completes the estimates need for \eqref{D1} and \eqref{D3}.
\end{proof}

We are now finally ready to give the proof of the main theorem
\begin{proof}[Proof of Theorem \ref{mainthm}]
From Theorem \ref{denlammu} we have
\begin{align*}
\rho_n(x)&=\mathcal K_n(x)\int_{\R}|\eta|\ \widehat{D}_n(0,\eta;x)\ d\eta.
\end{align*}

Estimating the integral on the right-hand side and using Lemma \ref{lem4.3} yields
\begin{align*}
\int_{\R}|\eta|\ \widehat{D}_n(0,\eta;x)\ d\eta
&= \left(\int_{-1}^1+\int_{|\eta|\geq 1}\right)|\eta|\ \widehat{D}_n(0,\eta;x)\ d\eta\\
&\leq K_1 \int_{-1}^1\frac{|\eta|}{1+|\eta|}\ d\eta +K_2\int_{|\eta|\geq 1}\frac{|\eta|}{1+|\eta|^3}\ d\eta\\
&=2K_1 (1-\log 2)+\frac{2}{9}K_2(\sqrt{3}\pi +\log 8)\\
&=\frac{1}{aq}(k_1+C_3k_2)+\frac{1}{(aq)^{3/2}}[C_2(k_3+C_2k_4)],
\end{align*}
where
\begin{align*}
k_1&=\frac{1}{\pi}\left(1-\log 2+\frac{\pi\sqrt{3}+\log 8}{9}\right)= 0.36367\dots,\\
k_2&=\frac{1}{9\pi}(\pi\sqrt{3}+\log 8)= 0.265995\dots,\\
k_3&=\frac{1}{\sqrt{2}}(1-\log 2)= 0.216978\dots,\\
k_4&=\frac{\sqrt{2}}{9}(\pi \sqrt{3}+\log 8)= 1.18179\dots,
\end{align*}
and thus completes the proof.
\end{proof}

\subsection{Proof of Theorem \ref{Berg1}}

For Bergman polynomials $p_k(z)=\sqrt{(k+1)/\pi}z^k$ observe that
\begin{align}
\nonumber
K_n(z,w)&=\sum_{k=0}^np_k(z)\overline{p_k(w)}\\
\nonumber
&=\frac{1}{\pi}\sum_{k=0}^n(k+1)(z\overline{w})^k \\
\label{bp}
&=\frac{1-(z\overline{w})^{n+1}(2-z\overline{w})}{\pi (1-z\overline{w})^2}-\frac{n(z\overline{w})^{n+1}}{\pi (1-z\overline{w})}.
\end{align}
Thus for $x\in \R\setminus \{\pm 1\}$, as $n\rightarrow \infty$ locally uniformly on the respective domains we have
\begin{equation}\label{bx}
 K_n(x,x)=\begin{cases}
      \displaystyle\frac{1}{\pi(1-x^2)^2}+o(1), & |x|<1, \\[2ex]
      \displaystyle\frac{nx^{2n+2}}{\pi (x^2-1)}(1+o(1)), & |x|>1.
   \end{cases}
\end{equation}
Taking derivatives of \eqref{bp} and then evaluating on the diagonal to form the other needed kernels $K_n^{(0,1)}(x,x)$ and $K_n^{(1,1)}(x,x)$, after algebraic simplification one sees
\begin{equation}\label{nbx}
K_n(x,x)K_n^{(1,1)}(x,x)-(K_n^{(0,1)}(x,x))^2=\begin{cases}
      \displaystyle\frac{2}{\pi^2(1-x^2)^6}+o(1), & |x|<1, \\[2ex]
      \displaystyle\frac{n^2x^{4n+4}}{\pi^2 (x^2-1)^4}(1+o(1)), & |x|>1.
   \end{cases}
\end{equation}
Therefore, combining \eqref{bx} and \eqref{nbx} locally uniformly for $x\in \R\setminus \{\pm 1\}$ we have
\begin{align*}
\lim_{n\rightarrow \infty}\mathcal K_n(x)&=\lim_{n\rightarrow \infty}\sqrt{\frac{K_n(x,x)K_n^{(1,1)}(x,x)-K_n^{(0,1)}(x,x)^2}{K_n(x,x)^2}}\\
&=\begin{cases}
      \displaystyle\frac{\sqrt{2}}{1-x^2} & |x|<1, \\[1.5ex]
      \displaystyle\frac{1}{x^2-1} & |x|>1.
   \end{cases}
\end{align*}
When $x=\pm 1$, using summation formulas the kernels can be evaluated directly to give
\begin{equation*}
K_n(x,x)=\frac{(n+2)(n+1)}{2},\ \ \  K_n^{(0,1)}(x,x)^2=\left(\frac{n(n+1)(n+2)}{3}\right)^2,
\end{equation*}
and 
$$ K_n^{(1,1)}(x,x)=\frac{n(n+1)(n+2)(3n+1)}{12}.$$
Hence
\begin{equation*}
\mathcal K_n(x)=\frac{1}{3}\sqrt{\frac{n(n+3)}{2}}, \quad \text{for}\quad x=\pm 1.
\end{equation*}

\subsection*{Acknowledgements}
The author would like to thank his Ph.D. advisor Igor Pritsker for his helpful conversations concerning this project, and financial support from the Vaughn Foundation on behalf of Anthony Kable.

\end{document}